\def\ti{\tilde}
\def\dist{\text{\rm dist}}
\def\to{\rightarrow}
\def\ms{\medskip}
\def\R{{\mathbb R}}
\def\Z{{\mathbb{Z}}}
\def\C{{\mathbb{C}}}
\def\L{\Lambda}
\def\l{\lambda}
\def\lan{\lambda_n}
\newcommand{\he}{\mathcal{H}(E)}
\newcommand{\ta}{\tilde A}
\newcommand{\tb}{\tilde B}
\newcommand{\te}{\tilde E}
\newcommand{\vep}{\varepsilon}
\renewcommand{\phi}{\varphi}
\newcommand{\rea}{{\rm Re}\,}
\newcommand{\ima}{{\rm Im}\,}
\theoremstyle{plain}
\newtheorem{lemma}{Lemma}
\newtheorem{theorem}{Theorem}
\newtheorem{corollary}{Corollary}
\newtheorem{remark}{Remark}
\numberwithin{equation}{section}
\author{A.~Baranov}
\address{A.~Baranov 
\newline Department of Mathematics and Mechanics, St.~Petersburg State University, St.~Petersburg, Russia,
}
\email{anton.d.baranov@gmail.com}
\author{Y.~Belov}
\address{Y.~Belov \newline Chebyshev Laboratory, St.~Petersburg State University, St. Petersburg, Russia}
\email{ j\_b\_juri\_belov@mail.ru}
\author{A.~Poltoratski}
\address{A.~Poltoratski \newline Texas A\&M University
\\ Department of Mathematics\\
College Station, TX 77843, USA }
\email{alexeip@math.tamu.edu}
\thanks{The work is supported by Russian Science Foundation grant 14-41-00010.}
\keywords{Schroedinger equations, de Branges spaces, estimation of eigenvalues, canonical systems}
\subjclass[2010]{34L15, 46E22}
\title{De Branges functions of Schroedinger equations}
\begin{document}
\sloppy

\begin{abstract} We characterize the Hermite-Biehler (de Branges) functions $E$ which correspond
to Shroedinger operators with $L^2$ potential on the finite interval. From this characterization
one can  easily deduce a recent theorem by Horvath. We also obtain a result about location of resonances.
\end{abstract}

\maketitle

\ms\section{Introduction}

The Krein--de Branges theory of Hilbert spaces of entire functions was created in the mid-twentieth century to treat spectral problems for second order differential equations. The central object of the theory is a Canonical System of differential equations on the real line. The main result of the theory states that there exists a one-to-one correspondence between such systems and de Branges spaces of entire functions.
Each de Branges space is generated by a single de Branges entire function $E$  which encodes full information about the space and the
differential operator.
Via this result, spectral problems for differential operators translate 
into uniqueness and interpolation problems for spaces of entire functions. After 
such a translation, they can be viewed in a systematic way and treated 
using powerful tools of complex analysis. Since its creation, the theory 
has exceeded its original purpose and now extends to many fields of mathematics. 
Among them are complex function theory and functional model theory, spectral 
theory of Jacobi matrices and the theory of orthogonal polynomials, number 
theory and intriguing relations with the Riemann 
Hypothesis, see for instance \cite{DM, Lagarias1, Lagarias2, Romanov1,MIF}.

The Krein--de Branges theory is a classical, yet still developing area of analysis with many important open questions. Among them is a number of
'characterization problems' where a description of de Branges functions corresponding to various important sub-classes of Canonical Systems is required (see e.g., \cite[Theorems 1.4, 1.5, 6.1]{abb}). 
Through a standard procedure, see Section \ref{prelim}, many  second order 
equations and systems can be rewritten as Canonical Systems. Among them
are Schroedinger operators on an interval or half-line, Dirak systems, 
Krein strings, etc. Via the main theorem of the Krein--de Branges theory 
mentioned above, these classes of differential operators can be uniquely identified with corresponding classes of de Branges entire functions
$E$. Descriptions (characterizations) of such classes of entire functions present a whole set of interesting and challenging problems. Most of them are still open.

In this paper we solve one of such characterization problems by describing the set of de Branges functions which correspond to Schroedinger operators
with $L^2$-potentials on the interval. We restrict ourselves to the case $p=2$ due to the fact that in this case the characterization theorem, see
Theorem \ref{char}, takes an especially natural and satisfying form. 
We obtain our result from the characterization of spectra for Schroedinger operators by Chelkak \cite{Chelkak} (although, as mentioned in \cite{Chelkak}, such a spectral characterization in the case $p=2$ was  known earlier).
The difference with $p\neq 2$ is largely due to the fact that the Fourier transform of an $L^p$-function is not as easy to describe as in the $p=2$ case. Nonetheless, our methods allow us to study characterization theorems
for general $L^p$ and other classes of potentials. Such studies will be presented elsewhere.

To illustrate applications of characterization theorems for de Branges functions, 
in Section \ref{horv} we obtain a recent theorem by M. Horvath \cite{Horvath} which 
is considered by many experts to be the
strongest result in that area. As was demonstrated in \cite{Horvath}, 
it implies a number of classical and recent results
such as Ambarzumian's theorem, Borg's two-spectra theorem, 
the results by Hoschtadt--Lieberman, Gesztesy--Simon, del Rio--Simon and several others.
Achieving an understanding of connections between Horvath's 
methods with the Krein--de Branges theory served as initial motivation for this paper.

In addition in Section \ref{res} we prove that some logarithmic strip 
is free of zeros of $E$.  For physists zeros of $E$ are known as {\it resonances} 
(poles of the scattering matrix), see \cite{simon, korot}. 
It is easy that this result is essentially 
sharp and we give a corresponding example.

\textbf{Organization of the paper.} The paper is organized as follows.
In Section \ref{prelim} we remind the reader the basics of the theory and give further references.
In Section \ref{mainth} we formulate and prove the main result of the paper. 
Sections \ref{horv} and \ref{res}
are devoted to applications.


\ms\section{Preliminaries}
\label{prelim}

\subsection{De Branges spaces} Consider an entire function  $E(z)$ satisfying
the inequality
$$ 
|E(z)|>|E(\bar{z})|, \qquad  z \in \C_{+},
$$ 
and such that $E\ne 0$ on $\mathbb{R}$. 
Such functions are usually called  {\it de~Branges functions} (or 
{\it Hermite--Biehler functions}). The {\it de~Branges 
space} $\he$ associated with $E$ is defined to be the space of
entire functions $F$ satisfying
$$ 
\frac{F(z)}{E(z)} \in H^2(\C_+), \hspace{1cm}
\frac{F^{\#}(z)}{E(z)} \in H^2(\C_+),
$$ 
where $F^\#(z) = \overline{F(\overline{z})}$, $H^2(\C_+)$ is a Hardy class in $\C_+$.
It is a Hilbert space
equipped with the norm $\|F\|_E =\|F/E\|_{L^2(\R)}.$ 
If $E(z)$ is of exponential type then
all the functions in the de~Branges space $\he$ are of
exponential type not greater then the type of $E$  
(see, for example, the last part in the proof of Lemma 3.5 in~\cite{DM}). 
A de~Branges space is called {\it short} (or {\it regular}) if together with
every function $F(z)$ it contains $(F(z)-F(a))/(z-a)$ for any
$a\in\C$.

One of the most important features of de Branges spaces 
is that they admit a second, axiomatic, definition. Let $\mathcal{H}$ be
a Hilbert space of entire functions that satisfies the following axioms:

\begin{itemize}

\item (A1) For any $\l\in\C$, point evaluation at $\l$ is a non-zero bounded 
linear functional on $\mathcal{H}$;

\item (A2) If $F\in \mathcal{H},\ F(\l)=0$, then 
$F(z)\frac{z-\bar\l}{z-\l}\in \mathcal{H}$ with the same norm;

\item (A3) If $F\in \mathcal{H}$ then $F^\#\in \mathcal{H}$ with the same norm.

\end{itemize}

Then $\mathcal{H}=\he$ for a suitable de Branges function $E$. 
This is Theorem 23 in \cite{dB}.


\subsection{Canonical Systems}
\label{canon}
Let $\Omega$ be a symplectic matrix in $\R^2$:

$$
   \Omega=\left(\begin{array}{cc}
    0 & 1 \\
   -1 & 0
   \end{array} \right).
$$

A \textit{canonical system} of differential equations (CS) is the system
\begin{equation}
\Omega \dot X(t)=zH(t)X(t),\label{e1}
\end{equation}
where $H$ is a real, locally summable, $2\times 2$-matrix 
valued function on an interval $(a,b)\subset \mathbb{R}$, 
called a Hamiltonian of the system, and
$X$ is an unknown vector valued function $X=\left(\begin{array}{c}
    u \\
   v
   \end{array} \right)$. Such systems were considered by M.~Krein as a general form of a second order linear differential operator. As was mentioned in the introduction, many standard classes of second order equations can be equivalently rewritten as canonical systems.

  Canonical systems and de Branges spaces together constitute the so-called Krein--de Branges theory. The connection between the two is as follows.
  Let $X=\left(\begin{array}{c}
    A(z,t) \\
   B(z,t)
   \end{array} \right)$ be a solution to \eqref{e1} 
   satisfying some initial condition at $a$, for instance $X(a)=
   \left(\begin{array}{c}
    0\\
    1
   \end{array} \right)$. Then for any fixed $t\in (a,b)$, 
the function $E_t(z)=A(z,t) + i B(z,t)$ is a de Branges function.
Under some minor technical restrictions 
on the Hamiltonian, the corresponding spaces $\mathcal{H}(E_t)$ are nested, i.e.,
$\mathcal{H}(E_t)$ is isometrically embedded
into $\mathcal{H}(E_s)$ for any $t<s$. In the opposite direction, 
any de Branges function $E$ can be obtained 
this way from a canonical system \eqref{e1}, see Theorem 40 in \cite{dB} or Theorems 16-18
\cite{Romanov2}. 
The solution $X$ can be used as a kernel of an integral operator (Weyl transform)
which identifies the space of vector-valued square-summable functions on 
$(a,t)$ with $\mathcal{H}(E_t)$. For more on Krein--de Branges theory see 
\cite{DM, dB, Romanov2}.


\subsection{Schroedinger operators} 
\label{schr}
This paper is devoted 
to a particular case of a canonical system, the Schroedinger 
equation on an interval $(a,b)$,
\begin{equation}
\label{e2}
-\ddot u +qu= z u.
\end{equation}

Since this equation can be rewritten as a canonical system (see for instance \cite{MIF, Romanov2}), it corresponds to a chain of de Branges functions/spaces as described above. Let us present a shorter way to establish this connection, without transforming the equation to the canonical form
\eqref{e1}, e.g. \cite{MIF}.

Assume that the potential $q(t)$ is  integrable on a finite interval $(a,b)$, i.e. that the operator we consider is regular.
We make this restriction for the sake of simplicity, although the theory can be extended to general non-regular cases, see \cite{MIF}.
Let $u_z(t)$ be the solution of \eqref{e1}  satisfying the boundary conditions $u_z(a)=0$ and $\dot u_z(a)=1$.
The Weyl $m$-function is defined as
$$\textbf{m}(z)=\frac{u_z(b)}{\dot u_z(b)}.$$
It is well known that the Weyl function is a Herglotz function, i.e., 
a meromorphic function in $\C$ with a positive imaginary part in $\C_+$.

Now let us assume that the operator with Neumann boundary conditions 
at $a$ and Dirichlet boundary conditions at $b$ is positive (otherwise one can add a large positive constant to $q$).
In this case one may consider the Weyl function after the 'square root transform', i.e. after a change of variables
$m(z)=z\textbf{m}(z^2)$. If the operator is positive then the modified Weyl function $m(z)$ is also a Herglotz function.

The entire function
\begin{equation}
  E(z)=zu_{z^2}(b)+i\dot u_{z^2}(b) 
\label{Eform}
\end{equation}
is the de Branges function for the corresponding canonical system. 
In particular, if $q\equiv 0$, then $E(z)=ie^{-i z}$, and we get the 
classical Paley--Wiener space as the corresponding de Branges space $\he$.

Closely related analytic (meromorphic) function is the Weyl inner function
$$
\Theta=\frac{m-i}{m+i}\ \ \ or \ \ \ \Theta=\frac{E^\#}E,
$$
where $E^\#(z)=\overline{E(\bar z)}$.
Such functions will also be used in our discussions below. Each of the 
functions, $E(z)$, $m(z)$, $\textbf{m}(z)$ or $\Theta(z)$, determine the 
operator uniquely, as follows from classical results of Marchenko \cite{march1, march2}.


\ms\section{A characterization theorem}
\label{mainth}

\subsection{Spectra of the Schroedinger operators} 
Every de Branges function comes from a canonical system, 
as follows from the main result of Krein--de Branges theory. 
But which of the de Branges functions can be obtained from 
Schroedinger operators via the procedure described above? 
How the properties of the potential translate into the properties of $E$?

Once again, let us consider the Schroedinger equation

\begin{equation}-\ddot u +qu= z u\label{e3}\end{equation}
on the interval $[0,1]$.
Denote by $\sigma_{DD}$ the spectrum of $L$ with Dirichlet 
boundary conditions at the endpoints after the square root transform, i.e.,
$\sigma_{DD}=\{\lan^2\}$ such that for each $\lan$ there 
exists a solution of \eqref{e3} with $z=\lan^2$ satisfying
$u(0)=u(1)=0$. Similarly, $\sigma_{ND}$ will denote the spectrum for the mixed 
Neumann/Dirichlet conditions,
$\dot u(0)= u(1)=0$. We assume that $q\in L^1([0,1])$ 
is such that both operators with DD and ND boundary conditions 
are positive (otherwise add a positive constant to $q$). 
In that case both spectra are real. 

As defined in Subsections \ref{canon} and \ref{schr}, $E(z)$ and $m(z)$ will 
denote the de Branges and Weyl functions of the Schroedinger operator.
Another standard object associated with $E(z)$ is the phase function 
$\phi(x)=-\arg E(x)$, a continuous branch of the argument of $E$ on the
real line.

Note, that in terms of $\phi$ the spectra of the operator can be identified as
$$
\pm\sqrt{\sigma_{DD}}\cup\{0\}=\{x:\,\phi(x)=n\pi\},\qquad \pm\sqrt{\sigma_{ND}}=\biggl{\{}x:\,\phi(x)=n\pi +\frac \pi 2\biggr{\}},
$$

Since the phase function is always a growing function on $\R$, we see that the spectra 
$\sigma_{DD},\ \sigma_{ND}$ are alternating sequences,
which is a well-known fact of spectral theory. If $E$ is a de Branges function, it can be represented
as $E(z)= A(z)+iB(z)$ where $A$ and $B$ are entire functions which are real on the real line. These functions have alternating zero sequences
on $\R$ and can be viewed as analogs of sine and cosine. 
In terms of these functions, the spectra are seen as the zero sets
(note that by the constructions $A$ is odd and $B$ is even see \eqref{Eform}):
$$
\pm\sqrt{\sigma_{DD}}\cup\{0\}=\{x:\,A(x)=0\},\qquad \pm\sqrt{\sigma_{ND}}=\{x:\,B(x)=0\}.
$$

To prove our main results we utilize the following 
characterization of the spectra of regular Schroedinger 
operators by D. Chelkak \cite{Chelkak}.

\begin{theorem} 
\label{chel}
Two alternating sequences $\{\lan^2\}$ and $\{\mu^2_n\}$ on $\R$ are equal to the spectra, $\sigma_{DD}$ and $ \sigma_{ND}$ correspondingly,
for some Schroedinger operator on the interval $[0,1]$ with $q\in L^2([0,1])$
if and only if they satisfy the asymptotics
\begin{equation}
\label{aslm}
\lan^2=\pi^2 n^2+C+a_n, \qquad \mu^2_n=\pi^2\biggl{(}n-\frac{1}{2}\biggr{)}^2+C+b_n,\qquad n\in\mathbb{N}, 
\end{equation}
for some real $C$ and some $\{a_n\},\{b_n\}\in l^2$.
\end{theorem}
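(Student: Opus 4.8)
The plan is to prove the two implications of \eqref{aslm} separately. The \emph{necessity} of the asymptotics for a genuine potential $q\in L^2([0,1])$ is the direct spectral problem and is comparatively soft; the \emph{sufficiency}, i.e. the realizability of an arbitrary pair of sequences obeying \eqref{aslm}, is the inverse problem and carries the real difficulty.

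\textbf{Necessity.} I would run the standard perturbative analysis of the two boundary value problems around the free case $q\equiv 0$, whose DD eigenfunctions are proportional to $\sin(\pi n t)$ and whose ND eigenfunctions (with $\dot u(0)=u(1)=0$) are proportional to $\cos(\pi(n-\frac12)t)$. First order perturbation theory gives
$$
\lambda_n^2=\pi^2 n^2+2\int_0^1 q(t)\sin^2(\pi n t)\,dt+r_n,\qquad
\mu_n^2=\pi^2(n-\tfrac12)^2+2\int_0^1 q(t)\cos^2(\pi(n-\tfrac12)t)\,dt+s_n,
$$
with remainders $\{r_n\},\{s_n\}\in l^2$. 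Using $2\sin^2\theta=1-\cos2\theta$ and $2\cos^2\theta=1+\cos2\theta$, the leading correction in \emph{both} series equals $\int_0^1 q$, which is precisely the shared constant $C$ of \eqref{aslm}. The oscillatory parts are $\mp\int_0^1 q(t)\cos(2\pi n t)\,dt$ and $\int_0^1 q(t)\cos((2n-1)\pi t)\,dt$: the DD spectrum reads off the cosine Fourier coefficients of $q$ at the \emph{even} frequencies, the ND spectrum those at the \emph{odd} frequencies, and both sequences lie in $l^2$ by Bessel's inequality exactly because $q\in L^2$. A quantitative version of the expansion — controlling the higher order remainders in $l^2$ as well, e.g. via the transmutation (transformation) kernel, which inherits the regularity of $q$ — yields the stated form with $\{a_n\},\{b_n\}\in l^2$.

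\textbf{Sufficiency.} The natural route within the present framework is to build the de Branges function from the data. Given sequences as in \eqref{aslm}, I would form entire functions $A$ and $B$ as Hadamard canonical products with the prescribed zero sets $\pm\sqrt{\sigma_{DD}}\cup\{0\}$ and $\pm\sqrt{\sigma_{ND}}$. The asymptotics place these zeros at $\pm\pi n+o(1)$ and $\pm\pi(n-\frac12)+o(1)$, so $A$ and $B$ have exponential type $1$ and interlacing real zeros — precisely the conditions guaranteeing that $E=A+iB$ is a Hermite--Biehler function of the shape \eqref{Eform}, equivalently that $\Theta=E^\#/E$ is inner in $\C_+$. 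The inverse Krein--de Branges correspondence (Section \ref{canon}) then produces a unique canonical system from $E$, and the remaining task is to show that this system is a Schroedinger equation \eqref{e3} with potential in $L^2$.

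\textbf{Main obstacle.} The hard part is this last step: the sharp matching of regularity classes. One must show that the $l^2$ deviation of the spectral data from the free positions corresponds \emph{exactly}, in both directions, to an $L^2$ perturbation of the zero potential. I would carry this out via Gelfand--Levitan/Marchenko reconstruction — the two spectra fix the spectral measure (the norming constants), the Gelfand--Levitan equation produces the transformation kernel, and $q$ is read off from it — so that $q\in L^2$ reduces to estimating that kernel in a Sobolev-type norm by the $l^2$ data; this is the technical core of \cite{Chelkak}. A more structural alternative, in the spirit of P\"oschel--Trubowitz, is to show that the spectral map $q\mapsto(\sigma_{DD},\sigma_{ND})$ is a real-analytic diffeomorphism of $L^2([0,1])$ onto the affine model manifold cut out by \eqref{aslm}: by the computation above its differential at $q=0$ is, modulo the mean $C$, the full cosine-coefficient map $L^2\to l^2$ (even frequencies from DD, odd from ND), hence an isomorphism, after which a properness/degree argument upgrades local invertibility to a global bijection. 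In either approach the delicate point is not the algebra but the precise $L^2\leftrightarrow l^2$ correspondence, and the necessity of \emph{two} spectra is explained structurally by the even/odd frequency split above.
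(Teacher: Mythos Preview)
The paper does not prove this theorem. It is quoted from \cite{Chelkak} as a known input, and the only argument the paper supplies is a one-line reduction: since the DD spectrum of an even potential $q(t)\equiv q(1-t)$ on $[0,1]$ is the union of the DD and ND spectra on $[0,1/2]$, the two-spectra statement \eqref{aslm} follows from Chelkak's one-spectrum characterization. Everything else in the paper treats Theorem~\ref{chel} as a black box.

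Your proposal, by contrast, sketches a proof of the theorem itself. The necessity half is fine and standard; your identification of $C=\int_0^1 q$ and of the even/odd cosine frequencies carried by the DD/ND corrections is correct and matches Remark~1. For sufficiency you correctly flag the genuine difficulty --- the sharp $L^2\leftrightarrow \ell^2$ correspondence in the reconstruction --- and then defer to \cite{Chelkak} or to a P\"oschel--Trubowitz argument. That is honest, but it means your ``proof'' ultimately rests on the same external reference the paper invokes; you have described what Chelkak's argument must accomplish rather than carried it out. One small caution: building $E=A+iB$ from the canonical products and then appealing to the inverse Krein--de~Branges correspondence produces a canonical system, not a priori a Schroedinger equation, and isolating the Schroedinger subclass inside canonical systems is itself nontrivial --- so that route does not bypass the Gelfand--Levitan analysis, it only repackages it.
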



To obtain this statement from the main result of \cite{Chelkak} 
one needs to note that DD spectrum for even potentials $q$ (i.e., $q(t)\equiv q(1-t))$ is an union
of DD and ND spectra for the interval $[0,1\slash2]$.

Applying the square root transform we get
\begin{equation}
\lambda_n=\pi n+\frac{C}{n}+\frac{a_n}{n},\qquad \mu_n = \pi n - \frac{\pi}{2} + \frac{C}{n}+\frac{b_n}{n},\qquad n\geq 1,
\label{as}
\end{equation}
for some real $C$ and $\{a_n\}$, $\{b_n\}\in\ell^2$. Put 
$\lambda_{-n}=-\lambda_n$, $\mu_{-n}=-\mu_n$, $n\geq1$ and $\lambda_0=0$.

\begin{remark}
{\rm The constant $C$ in Theorem \ref{chel} is equal 
to $\int_0^1 q(x)dx$, see \cite{Chelkak}. }
\end{remark}

In what follows we will use essentially the fact that the sequences $\{\lambda_n\}$
and $\{\mu_n\}$ are complete interpolating sequences in the {\it Paley--Wiener space}
$PW_1$, the space of all entire functions of exponential type at most 1 which are
in $L^2(\mathbb{R})$ or, equivalently, the Fourier image of the space $L^2(-1,1)$.
Recall that $\{\lambda_n\}\subset\mathbb{R}$ is said to be  a 
{\it complete interpolating sequence} for $PW_1$ if for any sequence $\{w_n\}\in \ell^2$
there exists a unique function $f\in PW_1$ such that $f(\lambda_n) = w_n$.
For a discussion of the interpolation theory and related Riesz bases of reproducing 
kernels in the Paley--Wiener spaces we refer to \cite{hnp, seip}. The fact that
$\{\lambda_n\}$ and $\{\mu_n\}$ are complete interpolating sequences follows
from the basic results of the theory (e.g., Kadets 1/4 theorem).


\subsection{Main theorem}
The main result of this paper is the following characterization theorem. 
We say that a de Branges entire function $E(z)$ corresponds
to a Schroedinger equation \eqref{e1} if it can be obtained 
from it following the procedure described in Subsection \ref{schr}.
To avoid inessential technicalities from now on we will assume that $q$
with DD and ND boundary conditions generates positive operators.

\begin{theorem}
\label{char}
Let $E=A+iB$ be a de Branges entire function of exponential type 1
and be of Cartwright class. 

1. $E$ corresponds to a Schroedinger equation on $[0,1]$ with $q\in L^2([0,1])$ 
if and only if 
\begin{equation}
\label{char1}
z(A(z)\cos z - B(z)\sin z)= f(z)+C
\end{equation}
for some real constant $C$ and some even real-valued function $f\in L^2(\R)$.

2. Assume that $\tilde E = \tilde A+i\tilde B$
corresponds to a Schroedinger equation on $[0,1]$ with $q\in L^2([0,1])$.
Then $E=A+iB$ corresponds to a Schroedinger equation on $[0,1]$ with 
a square-integrable potential if and only if 
\begin{equation}
\label{char2}
z(A(z)\tb(z) - \ta(z)B(z))= f(z)+C
\end{equation} 
for some real constant $C$ and some even real-valued function $f\in L^2(\R)$.

3. There exists $\vep>0$ such that for any even function $f\in PW_2$ which is real 
on $\mathbb{R}$ with $\|f\|_2 <\vep$ there exists a de Branges function $E=A+iB$ which 
corresponds to a Schroedinger equation on $[0,1]$ with $q\in L^2([0,1])$ 
such that 
\begin{equation}
\label{char3}
z(A(z)\cos z - B(z)\sin z)= f(z)-f(0).
\end{equation} 
\end{theorem}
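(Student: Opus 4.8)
The plan is to turn the statement into an inverse spectral problem and solve it perturbatively around the free case $q\equiv 0$. By Theorem~\ref{chel} together with \eqref{as}, producing a Schroedinger operator with $q\in L^2([0,1])$ is exactly the same as prescribing two alternating sequences
$$\lambda_n=\pi n+\frac{C+a_n}{n},\qquad \mu_n=\pi n-\frac{\pi}{2}+\frac{C+b_n}{n},$$
with $C\in\R$ and $\{a_n\},\{b_n\}\in\ell^2$; \emph{every} such datum (small enough to keep the sequences alternating) arises from a genuine operator, and the associated de Branges function is $E=A+iB$ where $A,B$ are the canonical products over $\{\pm\lambda_n\}\cup\{0\}$ and $\{\pm\mu_n\}$, normalized to coincide with $\sin z,\cos z$ at the free point. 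Thus I would introduce the parameter space $X=\R\times\ell^2\times\ell^2$ and the forward map $\mathcal{F}\colon(C,\{a_n\},\{b_n\})\mapsto g$, where $g(z)=z\bigl(A(z)\cos z-B(z)\sin z\bigr)$, and try to solve $\mathcal{F}(C,\{a_n\},\{b_n\})=f-f(0)$. At the free point $E_0=\sin z+i\cos z$, so $A=\sin z$, $B=\cos z$ and $g\equiv 0$; the hypothesis $\|f\|_2<\vep$ keeps us in a small neighbourhood of this point.

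The crux is the linearization of $\mathcal{F}$ at the free point. Perturbing a single zero of the canonical product $A=\sin z$ changes $A$ at the unperturbed nodes in an essentially diagonal fashion, and one computes $\dot A(\pi m)=(-1)^{m+1}\dot\lambda_m$, and likewise for $B$; since $\cos(\pi m)=(-1)^m$, $\sin(\pi m)=0$ and $\cos\pi(m-\tfrac12)=0$, this gives
$$\dot g(\pi m)=-\pi m\,\dot\lambda_m,\qquad \dot g\bigl(\pi(m-\tfrac12)\bigr)=-\pi\bigl(m-\tfrac12\bigr)\dot\mu_m,\qquad \dot g(0)=0.$$
Hence $D\mathcal{F}(0)$ sends the datum to the unique even function of $PW_2\oplus\C$ interpolating the prescribed values on the set $\{0\}\cup\{\pm\pi m\}\cup\{\pm\pi(m-\tfrac12)\}=\tfrac{\pi}{2}\Z$, which is precisely the critical complete interpolating sequence for $PW_2$. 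Writing $\dot\lambda_m=(\dot C+\dot a_m)/m$ and $\dot\mu_m=(\dot C+\dot b_m)/m$, the sampled values $-\pi(\dot C+\dot a_m)$ and $-\pi(1-\tfrac{1}{2m})(\dot C+\dot b_m)$ depend on $(\dot C,\{\dot a_m\},\{\dot b_m\})$ through a bounded, boundedly invertible diagonal correspondence. Combining this with the interpolation isomorphism for $PW_2$ shows that $D\mathcal{F}(0)$ is an isomorphism of $X$ onto the space of even real functions of $PW_2\oplus\C$ vanishing at $0$ — exactly the space containing $f-f(0)$.

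With the linearization in hand I would conclude by the inverse function theorem. First one checks that $\mathcal{F}$ is well defined into the target space and smooth near $0$: that $g$ lies in even real $PW_2\oplus\C$ follows from Part~1 (each admissible datum yields a genuine Schroedinger $E$, for which \eqref{char1} holds, and $g$ has exponential type at most $2$), while $C^1$ (indeed real-analyticity) in the $\ell^2$ variables follows from the locally uniform convergence and differentiability of the products $A(z)=z\prod(1-z^2/\lambda_n^2)$ and $B(z)=\prod(1-z^2/\mu_n^2)$. Since $\mathcal{F}(0)=0$ and $D\mathcal{F}(0)$ is invertible, $\mathcal{F}$ is a local diffeomorphism; choosing $\vep$ so small that $\|f-f(0)\|\le \const\,\|f\|_2<\vep$ lies in the image, we obtain a datum with $\mathcal{F}(C,\{a_n\},\{b_n\})=f-f(0)$. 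By construction and Theorem~\ref{chel} the resulting $E=A+iB$ is the de Branges function of a Schroedinger operator with $q\in L^2([0,1])$, and it satisfies \eqref{char3}.

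The main obstacle is the analytic bookkeeping underlying Steps two and three: proving rigorously that $\mathcal{F}$ is $C^1$ between the stated Banach spaces, with uniform control of the infinite products and of their derivatives in the $\ell^2$ variables, and that the quadratic remainder is dominated on a ball whose radius is independent of $f$. In particular, establishing the diagonal linearization identities above as genuine derivatives — rather than heuristics — is what makes $D\mathcal{F}(0)$ invertible and what fixes the threshold $\vep$; everything else (alternation, positivity, and membership in the Chelkak class) is preserved automatically for small perturbations of the free operator.
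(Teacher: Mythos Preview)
Your proposal addresses only Part~3, invoking Part~1 as a black box; Parts~1 and~2 (the ``if and only if'' characterizations) are left unproved.

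For Part~3 the paper takes a different and more direct route than yours. Given $f$, it \emph{constructs} $A$ and $B$ explicitly: solve the interpolation problems
\[
g(\pi n)=(-1)^n\frac{f(\pi n)-f(0)}{\pi n},\qquad h\Big(\pi n+\frac{\pi}{2}\Big)=(-1)^{n+1}\frac{f(\pi n+\pi/2)-f(0)}{\pi n+\pi/2}
\]
for $g,h\in PW_1$, set $A=\sin z+g$, $B=\cos z+h$, and verify via a Phragm\'en--Lindel\"of argument that \eqref{char3} holds \emph{exactly}, not just to first order. The zero asymptotics \eqref{as} are then read off by the same elementary root-finding computation (equation \eqref{sl4}) already used in the sufficiency proof of Part~2, and the smallness of $\|f\|_2$ enters only at the very last step, to force interlacing of \emph{all} zeros of $A$ and $B$ rather than just the large ones. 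No differentiability of a nonlinear map between Banach spaces is needed anywhere.

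Your inverse-function-theorem approach is conceptually valid: the linearization you compute is correct, and identifying $D\mathcal{F}(0)$ with the sampling isomorphism for $PW_2$ on $\frac{\pi}{2}\Z$ is the right picture. The obstacle you yourself flag is genuine, though: proving that $\mathcal{F}$ is $C^1$ from $\R\times\ell^2\times\ell^2$ into even real $PW_2$ (plus constants), with uniform control of the quadratic remainder, requires exactly the sort of infinite-product estimates (Lemma~\ref{entire}, Corollary~\ref{cor1}, formulas \eqref{sl}--\eqref{sl1}) that the paper develops for Parts~1--2 anyway. So the underlying analytic work is comparable; the paper's construction simply avoids repackaging it as a smoothness statement and yields an explicit formula for $E$. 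Your framework, on the other hand, would adapt more transparently to perturbation around a nontrivial base $\tilde E$ as in Part~2.
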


\begin{remark}
{\rm It is well-known that the functions $A$ and $B$ in the statement 
are entire functions of exponential type 1. 
Hence the function $f$ in statements 1 and 2 
is actually a function from the Paley--Wiener space $PW_2$. }
\end{remark}

\begin{remark}
{\rm The property \eqref{char2} may be rewritten as $\rea(zE\tilde E) 
\in Const + L^2(\mathbb{R})$, where we denote by $Const$ the class 
of constant functions. }
\end{remark}


\subsection{Preliminary estimates} 
\label{prelimest}

In the proof of Theorem \ref{char} we will use the following simple lemma 
about canonical products with zeros satisfying the asymptotics \eqref{as}.
    
\begin{lemma}
\label{entire} 
Let $A$ and $B$ be  Cartwright class entire functions which are real on $\mathbb{R}$
and whose zeros $\lambda_n$ and $\mu_n$
satisfy the asymptotics \eqref{as}.
Then
\begin{equation}
 \label{sin}
 \bigg|\frac{A(z)}{\sin  z}\bigg| \asymp 
 \frac{\dist(z, \{\lambda_n\})}{\dist \big(z, \Z \big)}, 
 \qquad
 \bigg| \frac{B(z)}{\cos  z} \bigg| 
 \asymp \frac{\dist(z, \{\mu_n\})}{\dist \big(z, \Z+\frac{1}{2}\big)},
\end{equation}
and, moreover,
\begin{equation}
 \label{sin1}
 A(\pi n) = (-1)^n C_1 \Big(\frac{C}{n} +\frac{\alpha_n}{n} \Big), \quad
 B\Big(\pi n+\frac{\pi}{2} \Big) = (-1)^{n+1}  C_2 \Big(\frac{C}{n} +\frac{\beta_n}{n} 
 \Big),
\end{equation}
where $C_1, C_2$ are real nonzero constants and $\{\alpha_n\}, \{\beta_n\} \in \ell^2$.
\end{lemma}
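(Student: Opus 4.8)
The plan is to realize $A$, $B$, $\sin$ and $\cos$ as symmetric canonical products over their real, symmetric zero sets and to compare the resulting quotients. Since $A$ is a Cartwright function of exponential type $1$, real on $\R$ and odd, with zeros $\pm\lambda_n$ and $\lambda_0=0$, the genus-one Hadamard factorization paired as $\pm\lambda_n$ (which cancels the Weierstrass exponentials, and, by reality and oddness, the linear factor $e^{\alpha z}$) gives
\[
A(z)=c_A\,z\prod_{k\ge1}\Bigl(1-\frac{z^2}{\lambda_k^2}\Bigr),\qquad \sin z=z\prod_{k\ge1}\Bigl(1-\frac{z^2}{\pi^2k^2}\Bigr),
\]
and likewise $B(z)=c_B\prod_{k\ge1}(1-z^2/\mu_k^2)$, $\cos z=\prod_{k\ge1}\bigl(1-z^2/(\pi(k-\tfrac12))^2\bigr)$, with $c_A,c_B$ real and nonzero. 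Writing $d_k=\lambda_k^2-\pi^2k^2$, the asymptotics \eqref{as} give $d_k=2\pi(C+a_k)+O(k^{-2})$, so $d_k$ is bounded and $\sum_k d_k/(\pi^2k^2)<\infty$; hence $P_0:=\prod_k(\pi^2k^2/\lambda_k^2)$ converges to a nonzero constant and the quotient takes the form $A(z)/\sin z=c_A P_0\prod_{k\ge1}\frac{\lambda_k^2-z^2}{\pi^2k^2-z^2}$, and similarly for $B/\cos$.

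To prove \eqref{sin} I would use single-factor extraction. Fix $z$ (by oddness take $\rea z\ge0$); let $\pi n_0$ be a nearest zero of $\sin$ and $\lambda_{m_0}$ a nearest point of $\{\lambda_k\}$, so $|z-\pi n_0|=\dist(z,\pi\Z)$ and $|z-\lambda_{m_0}|=\dist(z,\{\lambda_k\})$, and by \eqref{as} one has $m_0=n_0$ up to a bounded shift. Then
\[
\frac{A(z)}{\sin z}=\frac{z-\lambda_{m_0}}{z-\pi n_0}\cdot\frac{A(z)/(z-\lambda_{m_0})}{\sin z/(z-\pi n_0)},
\]
and it remains to bound the regularized quotient above and below. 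Taking the logarithm of its modulus and using $\bigl|\log|1+w|\bigr|\le2|w|$ for $|w|\le\tfrac12$, the contribution of the far indices $|k-n_0|\ge2$ is dominated by $\sum_k|d_k|/|z^2-\pi^2k^2|$; since $|z^2-\pi^2k^2|\gtrsim|k-n_0|\,k$ for $\rea z\ge0$, the bounded part of $d_k$ is summable uniformly in $n_0$ and the $\ell^2$ part $a_k$ is handled by Cauchy--Schwarz. The finitely many near factors $|k-n_0|\le1$ are compared directly, using $|\lambda_k-\pi k|\to0$ so that $|z-\pi k|$ and $|z-\lambda_k|$ are comparable there. Thus the regularized quotient is $\asymp1$ and $|A(z)/\sin z|\asymp\dist(z,\{\lambda_k\})/\dist(z,\pi\Z)$; the case of $B/\cos$ is identical with $\cos$ and its zero set $\tfrac\pi2+\pi\Z$.

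For the boundary values \eqref{sin1} I would put $z=\pi n$ in the product for $A$ and split off the resonant factor $k=n$,
\[
A(\pi n)=c_A\,\pi n\,\frac{\lambda_n^2-\pi^2n^2}{\lambda_n^2}\prod_{k\ne n}\Bigl(1-\frac{\pi^2n^2}{\lambda_k^2}\Bigr),
\]
comparing $\prod_{k\ne n}(1-\pi^2n^2/\lambda_k^2)$ with the explicit value $\prod_{k\ne n}(1-\pi^2n^2/\pi^2k^2)=(-1)^{n+1}/2$ obtained from $\sin z=z\prod(\cdots)$ by a limit at $z=\pi n$. Setting $\prod_{k\ne n}(1-\pi^2n^2/\lambda_k^2)=\tfrac{(-1)^{n+1}}2\gamma_n$ with $\gamma_n=\prod_{k\ne n}\frac{\pi^2k^2}{\lambda_k^2}\cdot\frac{\lambda_k^2-\pi^2n^2}{\pi^2k^2-\pi^2n^2}$, and using $\pi n(\lambda_n^2-\pi^2n^2)/\lambda_n^2=2(C+a_n)/n+O(n^{-3})$, one obtains $A(\pi n)=(-1)^nC_1(C/n+\alpha_n/n)$ with $C_1=-c_A\gamma_\infty$, provided $\gamma_n$ converges to a nonzero limit $\gamma_\infty$ at an $\ell^2$ rate. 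The value $B(\pi n+\tfrac\pi2)$ is treated the same way, the only differences being an index shift (the resonant factor is $k=n+1$) and that the regularized $\cos$-product grows like $n$, which combines with the $O(n^{-2})$ resonant factor to again yield the order $C/n$.

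\emph{Main obstacle.} The only genuinely non-formal point is the uniform $\ell^2$-rate control of $\gamma_n$ (and its $B$-analogue): showing $\gamma_n=\gamma_\infty(1+\varepsilon_n)$ with $\gamma_\infty\ne0$ and $\{\varepsilon_n\}\in\ell^2$. Linearizing the logarithm (legitimate since every factor with $k\ne n$ is $1+O(1/n)$), this reduces to estimating $\sum_{k\ne n}d_k/(k^2-n^2)$. The $\ell^2$ part of $d_k$ gives an $\ell^2$ sequence by Cauchy--Schwarz together with $\sum_{k\ne n}(k^2-n^2)^{-2}=O(n^{-2})$, while the constant part $2\pi C$ is annihilated by the exact cancellation $\sum_{k\ge1,\,k\ne n}(k^2-n^2)^{-1}=\frac1{2n^2}$ (partial fractions and telescoping of harmonic sums). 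This is precisely the mechanism that also makes the far-tail bound in \eqref{sin} uniform, so both parts of the lemma rest on this single estimate.
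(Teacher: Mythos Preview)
Your argument is correct and follows essentially the same route as the paper: write $A/\sin z$ as an infinite product over the zeros, extract the factor corresponding to the nearest zero, and show that the logarithm of the remaining tail is small (for \eqref{sin}) and varies in $\ell^2$ with $n$ (for \eqref{sin1}). The paper works with the unpaired principal--value product over $k\in\Z\setminus\{0\}$, so the tail is $\sum_{k\ne n}(\lambda_k-\pi k)/(z-\pi k)$ with $\lambda_k-\pi k=(C+a_k)/k$; the $\ell^2$ rate in $n$ then follows in one line from the $\ell^2$--boundedness of the discrete Hilbert transform. Your paired product over $k\ge1$ leads to the equivalent tail $\sum_{k\ne n} d_k/(k^2-n^2)$, which you handle by Cauchy--Schwarz for the $\ell^2$ part of $d_k$ and by the explicit identity for the constant part; this is the same estimate unpacked by hand. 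Two minor remarks: the identity actually gives $\sum_{k\ge1,\,k\ne n}(k^2-n^2)^{-1}=\tfrac{3}{4n^2}$, not $\tfrac{1}{2n^2}$, but either value is $O(n^{-2})\in\ell^2$, so your conclusion stands; and you invoke oddness of $A$, which is not assumed in the lemma but does follow from the symmetric zero set $\lambda_{-n}=-\lambda_n$ together with the Cartwright--real representation.
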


\begin{proof} 
We prove the formulas \eqref{sin}--\eqref{sin1} for the function $A$. The case 
of the function $B$ is similar.
Since $A$ is a Cartwright class functions real on $\R$, 
it may be represented as principal value products
(with obvious modification if $0\in \{\lambda_n\}$):
\begin{equation}
\label{cart}
A(z) = K z\lim_{R\to \infty} \prod_{0<|\lambda_n|\le R} \Big(1- \frac{z}{\lambda_n}\Big),
\end{equation}
where $K\in \mathbb{R}$. The formula for $B$ is analogous. 

Let $|z|$ be sufficiently large and let $n = n(z)$ be the closest integer to $z$.
Then we have 
\begin{equation}
\label{sl}
\frac{A(z)}{\sin z} = K 
\frac{z-\lambda_n}{z-\pi n}
\cdot \prod_{k\ne 0} \frac{\pi k}{\lambda_k}\cdot
\prod_{k\ne n} \bigg(1 - \frac{\lambda_k - \pi k}{z-\pi k}\bigg).
\end{equation}
Clearly, 
$$
\sum_{k\ne n(z)} \bigg|\frac{\lambda_k - \pi k}{z-\pi k}\bigg| \to 0, 
\quad |z|\to \infty,
$$
which implies the first estimate in \eqref{sin1}
and even a stronger asymptotics 
\begin{equation}
\label{sin2}
\frac{A(z)}{\sin  z} \sim C_1 \frac{z-\lambda_{n(z)}}{z- \pi n(z)}, \quad |z|\to \infty, 
\qquad  
C_1 =  K \prod_{k\ne 0} \frac{\pi k}{\lambda_k}.
\end{equation}
Moreover, it is easy to see that 
\begin{equation}
\label{sl1}
\bigg\{ \sum_{k\ne n} \log \bigg(1 - \frac{\lambda_k - \pi k}{z_n -\pi k}\bigg)\bigg\} 
\in \ell^2
\end{equation}
as a sequence enumerated by $n$ 
for any sequence $z_n$ such that $z_n \notin \pi\mathbb{Z}$ and 
$\{z_n-\pi n\}$ is bounded 
(one can refer to the boundedness in $\ell^2$
of the discrete Hilbert transform or estimate the sum directly). 
Thus, when we put $z=\pi n$ in \eqref{sl}, 
we obtain equality \eqref{sin1} for $A$ 
with the same constant $C_1$ as in \eqref{sin2}.
\end{proof}

Let us list several further corollaries of \eqref{sl}.

\begin{corollary} 
In the conditions of Lemma \ref{entire} we have
\begin{equation}
\label{est1}
\{A(\mu_n) - C_1(-1)^n\} \in \ell^2, \qquad 
\{B(\lambda_n) - C_2(-1)^n\} \in \ell^2;
\end{equation}
\begin{equation}
\label{est2}
\{A'(\lambda_n) - C_1(-1)^n\} \in \ell^2, \qquad 
\{B'(\mu_n) - C_2(-1)^{n+1}\} \in \ell^2;
\end{equation}
\begin{equation}
\label{est3}
\frac{A(iy)}{C_1 \sin iy} - 1= \frac{2 C}{y} + o\Big(\frac{1}{|y|}\Big),
\qquad
\frac{B(iy)}{C_2 \cos iy} - 1= \frac{2 C}{y} + o\Big(\frac{1}{|y|}\Big), \qquad
|y|\to\infty.
\end{equation}
\label{cor1}
\end{corollary}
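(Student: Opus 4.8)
All three relations will be extracted from the single factorization \eqref{sl}, rewritten in exponential form
\[
\frac{A(z)}{\sin z}=C_1\,\frac{z-\lambda_{n(z)}}{z-\pi n(z)}\,\exp\Big(\sum_{k\ne n(z)}\log\Big(1-\frac{\lambda_k-\pi k}{z-\pi k}\Big)\Big),
\]
together with its evident analogue for $B/\cos$. The plan is to specialize $z$ to three families of points: the nodes of the complementary sequence, for \eqref{est1}; the zeros $\lambda_n,\mu_n$ themselves, for \eqref{est2}; and the imaginary axis $z=iy$, for \eqref{est3}. In each case \eqref{sl1} guarantees that the tail exponential is a factor $1+\ell^2$, so the whole computation collapses to tracking the rational prefactor and the value of $\sin$ (resp.\ $\cos$) at the chosen point.

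For \eqref{est1} I would set $z=\mu_n$ in the formula for $A/\sin$ and $z=\lambda_n$ in the one for $B/\cos$. Since $\mu_n$ sits at distance $\asymp\pi/2$ from $\pi\Z$, with $\{\mu_n-\pi n\}$ bounded and $\mu_n\notin\pi\Z$, formula \eqref{sl1} applies and the tail is $1+\ell^2$; the prefactor $\frac{\mu_n-\lambda_{n(\mu_n)}}{\mu_n-\pi n(\mu_n)}$ tends to $1$ with $\ell^2$ deviation because, by \eqref{as}, numerator and denominator tend to the same value $\pm\pi/2$ with $\ell^2$ errors; and $\sin\mu_n=\pm1+\ell^2$. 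Their product gives $A(\mu_n)=\pm C_1+\ell^2$, with the sign as recorded in \eqref{est1}, and similarly for $B(\lambda_n)$. For \eqref{est2} I would differentiate at an own zero: from $A(z)=\sin z\cdot C_1\frac{z-\lambda_n}{z-\pi n}\prod_{k\ne n}(\cdots)$ one reads off
\[
A'(\lambda_n)=\sin\lambda_n\cdot C_1\,\frac{1}{\lambda_n-\pi n}\prod_{k\ne n}\Big(1-\frac{\lambda_k-\pi k}{\lambda_n-\pi k}\Big).
\]
Here $\lambda_n-\pi n=(C+a_n)/n\to0$, so the seemingly singular factor is tamed by $\sin\lambda_n=(-1)^n\sin(\lambda_n-\pi n)$, giving $\frac{\sin\lambda_n}{\lambda_n-\pi n}=(-1)^n\bigl(1+\ell^2\bigr)$ via $\frac{\sin t}{t}=1+O(t^2)$; the remaining product is $1+\ell^2$ by \eqref{sl1} at $z_n=\lambda_n$. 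This yields \eqref{est2}, and $B'(\mu_n)$ is handled the same way.

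The genuine computation is \eqref{est3}. I would put $z=iy$, where $n(iy)=0$ and, since $\lambda_0=0$, the prefactor $\frac{iy-\lambda_0}{iy}$ equals $1$ exactly; thus $\frac{A(iy)}{C_1\sin iy}=\prod_{k\ne0}\bigl(1-\frac{\lambda_k-\pi k}{iy-\pi k}\bigr)$. Taking logarithms, the unique contribution of order $1/y$ is the linear term $-\sum_{k\ne0}\frac{\lambda_k-\pi k}{iy-\pi k}$. Writing $\lambda_k-\pi k=(C+\tilde a_k)/k$ and pairing $k$ with $-k$ reduces its leading part to a multiple of $C\sum_{k\ge1}\frac{1}{y^2+\pi^2k^2}$, which the classical $\coth$ partial-fraction identity evaluates as $\frac{\coth y}{2y}-\frac{1}{2y^2}\sim\frac{1}{2y}$; combined with $\exp(w)-1=w+O(w^2)$, this furnishes the leading $\frac{2C}{y}$ term of \eqref{est3}. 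The case of $B$ is analogous, with the nodes of $\cos$ replacing $\pi\Z$ and the product running over the full symmetric family of zeros.

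The one step that demands real care is the error accounting in \eqref{est3}: one has to check that the $\tilde a_k$-part of the linear sum is $O(y^{-3/2})$—by Cauchy--Schwarz against $\{(y^2+\pi^2k^2)^{-1}\}$—and that the quadratic and higher logarithmic terms are $O(y^{-2})$, so that both are $o(1/y)$ and only the $C$-part survives; pinning down the exact constant in that surviving term is the delicate point. The remaining verifications—that \eqref{sl1} genuinely applies at the perturbed points $z_n=\mu_n$ and $z_n=\lambda_n$, and the sign bookkeeping from $\sin$ and $\cos$ at the interlacing nodes—are routine once this mechanism is set up.
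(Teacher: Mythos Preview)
Your proposal is correct and follows essentially the same route as the paper: \eqref{est1}--\eqref{est2} come directly from the representation \eqref{sl} together with the $\ell^2$ inclusion \eqref{sl1}, and \eqref{est3} is obtained by specializing \eqref{sl} to $z=iy$, expanding the logarithm to first order, pairing $k$ with $-k$, and reducing the $C$-part to the sum $\sum_{k\ge1}(y^2+\pi^2k^2)^{-1}$. The paper in fact omits all details for \eqref{est1}--\eqref{est2} and handles the $a_k$-error in \eqref{est3} more cursorily than your Cauchy--Schwarz bound, so your plan is if anything more explicit, but the mechanism is identical.
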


\begin{proof} 
Inclusions \eqref{est1}--\eqref{est2} follow in a straightforward way from 
\eqref{sl} and the inclusion \eqref{sl1}.
We omit the details.

Now we prove \eqref{est3}. We have

\begin{equation}
\label{A1longest}
\begin{aligned}
\frac{A(z)}{C_1\sin z} & = 
\exp\biggl{[}\sum_{k\neq0}\log\biggl{(}1-\frac{\lambda_k-\pi k}{z-\pi k}\biggr{)}
\biggr{]} \\
& = \exp\biggl{[}\sum_{k\neq0}\frac{\pi k-\lambda_k}{z-\pi k}+o\biggl{(}\frac{1}{\dist(z,\pi\mathbb{Z})^2}\biggr{)}\biggr{]},
\end{aligned}
\end{equation}

when $|z|\rightarrow\infty$.
Taking $z=iy$, we get 
$$
\begin{aligned}
\frac{A(iy)}{C_1\sin iy} & = 
1+\sum_{k\neq0}\frac{\pi k-\lambda_k}{iy-\pi k}+o\biggl{(}\frac{1}{|y|}\biggr{)}
=1+\sum_{k=1}^\infty\frac{2\pi k(\lambda_k-\pi k)}{\pi^2k^2+y^2}+o\biggl{(}\frac{1}{|y|}\biggr{)} \\
&=1+2\pi\sum_{k=1}^\infty\frac{C+a_k}{\pi^2k^2+y^2}+o\biggl{(}\frac{1}{|y|}\biggr{)}=1+\frac{2 C}{y}+o\biggl{(}\frac{1}{|y|}\biggr{)}
\end{aligned}
$$
(here we used the fact that $\sum_{k=1}^\infty\frac{|y|}{\pi^2k^2+y^2}\rightarrow\pi^{-1}$ as $|y|\rightarrow\infty$).

The proof for $B$ is analogous.
\end{proof}
\medskip


\subsection{Proof of Theorem \ref{char}} 
\label{proofchar}

In this subsection we prove Theorem \ref{char}. 
\medskip
\\
{\bf Necessity of \eqref{char1} and \eqref{char2}.} 
Assume that $E=A+iB$
corresponds to a Schroedinger equation on $[0,1]$ with $q\in L^2([0,1])$.
Then, by Theorem \ref{chel}, the zeros $\lambda_n$ of $A$ and $\mu_n$ of $B$ 
satisfy the asymptotics \eqref{as}. By Lemma \ref{entire} (see \eqref{sin2}),
we have
$$
\frac{A(iy)}{\sin  iy} \sim C_1, \qquad  
\frac{B(iy)}{\cos  iy} \sim C_2,\qquad 
y\to \infty. 
$$
Since $E$ corresponds to a Schroedinger equation, we have $A(iy)/B(iy) \sim 
\sin iy/\cos iy$ (see \cite[Theorem 2.2.1]{march2}) and so we conclude that $C_1=C_2$.

Put $F(z) = z(A(z)\cos z - B(z)\sin z)$. By \eqref{sin1}, we have
$$
 F(\pi n) = \pi n\, A(\pi n) \cos \pi n = \pi C_1 C + \tilde \alpha_n, \qquad
 F\big(\pi n + \frac{\pi}{2}\big) = \pi C_1 C +\tilde \beta_n,
$$ 
for some $\{\tilde \alpha_n\}, \{\tilde \beta_n\}\in\ell^2$.
Hence, $f=F-\pi C_1 C$ is an entire function of exponential type at most $2$
such that $\big\{f\big( \frac{\pi m}{2} \big)\big\}_{m\in\mathbb{Z}} \in \ell^2$ and $f$ is real on the real line and even.
Since $\big\{\frac{\pi m}{2}\big\}_{m\in\mathbb{Z}}$ is a complete interpolating 
sequence for $PW_2$, there exists a unique function $g\in PW_2$ such that
$f\big( \frac{\pi m}{2} \big) = g\big( \frac{\pi m}{2} \big)$. 
Therefore $f-g$ vanishes on $\big\{\frac{\pi m}{2}\big\}_{m\in\mathbb{Z}}$
and so $f(z)-g(z) = h(z)\sin 2\pi z$ for some entire function $h$. Since $A,B$ 
are Cartwright class entire functions of type 1, we conclude that $h$ is 
of zero exponential type. 

Let us show that $h\equiv 0$. Indeed, 
$$
h(z) = z\bigg(\frac{A(z)}{2\sin z} - \frac{B(z)}{2\cos z}\bigg) - \frac{g(z)+C}{\sin 2z}.
$$
It follows from \eqref{est3} that
$$
y\bigg(\frac{A(iy)}{\sin iy} - \frac{B(iy)}{\cos iy}\bigg) = o(1), \qquad |y| \to \infty, 
$$
Since also $|g(iy)/\sin(2iy)| \to 0$ we conclude that $|h(iy)| \to 0$, $|y|\to \infty$, 
and finally $h\equiv 0$ by the standard Phragm\'en--Lindel\"of principle.

The proof of necessity of \eqref{char2} in statement 2 is analogous and we omit it.
\medskip
\\
{\bf Sufficiency
of \eqref{char1} and \eqref{char2}.} 
We will prove the more general statement about 
sufficiency of \eqref{char2}.
Assume that $\te$ corresponds to a Schroedinger equation
and let $A$ and $B$ satisfy \eqref{char2} for some $f\in PW_2$ and $C\in\mathbb{R}$.
It remains to show that the zeros of $A$, $B$ satisfy \eqref{as}. 

Let us denote the zeros of $\ta$ and $\tb$ by $t_n$ and $s_n$ respectively.
Then, comparing the values at $t_n$ and $s_n$ we get
$$
  A(t_n) = \frac{1}{\tb(t_n)}\bigg(\frac{C}{t_n} + \frac{f(t_n)}{t_n}\bigg), 
  \qquad 
  B(s_n) = -\frac{1}{\ta(s_n)} \bigg(\frac{C}{s_n} + \frac{f(s_n)}{s_n}\bigg),\qquad n\neq0.
$$

By \eqref{sin} we see that $|\tb(t_n)| \asymp |\ta(s_n)| \asymp 1$.
Since both $t_n$ and $s_n$ are complete interpolating sequences for $PW_1$,
there exist unique functions $g, h\in PW_1$ 
such that $g(0)=0$ and
$$
g(t_n) = \frac{1}{\tb(t_n)}\bigg(\frac{C}{t_n} + \frac{f(t_n)}{t_n}\bigg),
\qquad
h(s_n) = -\frac{1}{\ta(s_n)} \bigg(\frac{C}{s_n} + \frac{f(s_n)}{s_n}\bigg), \qquad n\neq0.
$$
Since $\tilde{A}$ is odd and $\tilde{B}$ is even we have $g(t_{-n})=g(-t_n)=-g(t_n)$ and $h(s_{-n})=h(s_n)$.
Hence, $g,h$ are real on $\mathbb{R}$, $g$ is odd and $h$ is even.
The function $A-g$ vanishes on $\{t_n\}$ and we conclude that $A-g = \ta h$ 
for some entire function $\tilde g$. Both $A, \ta$ are Cartwright class entire functions 
of type 1 and so $\tilde g$ is a constant. Thus, $A= g+\beta\ta$ and, analogously, 
$B = h+\beta \tb$ (equation \eqref{char2} implies that the constant is the same).
Without loss of generality assume in what follows that $\beta=1$. Note that $A$ is odd and $B$ is even.

Let us study the zero asymptotics for $A = \ta+g$, the case of $B = \tb+h$ is analogous.
Recall that $\{t_n\}$ is a complete interpolating sequence 
for $PW_1$ and so the functions $\frac{\ta(z)}{\ta'(t_n)(z-t_n)}$ 
form a Riesz basis in $PW_1$. Hence, expanding $g$ with respect to this Riesz basis, 
we get
$$
g(z) = \sum_{n\in\mathbb{Z}} \frac{\ta(z)}{\ta'(t_n) \tb(t_n) (z-t_n)}\cdot
\bigg(\frac{C}{t_n} + \frac{f(t_n)}{t_n}\bigg).
$$
Consider the equation $\ta+g_0 = 0$ or, equivalently, 
\begin{equation}
\label{sl4}
1 + \sum_{n\in\mathbb{Z}} \frac{1}{\ta'(t_n) \tb(t_n) (x-t_n)}\cdot 
\bigg(\frac{C}{t_n} + \frac{f(t_n)}{t_n}\bigg) = 0.
\end{equation}
It is easy to see the sum in \eqref{sl4} tends to zero as $x\to\infty$
and $x$ is separated from $\{t_n\}$. Thus, 
for any $\delta>0$ the expression in the left-hand side of \eqref{sl4}
is positive on $(t_n+\delta, t_{n+1}-\delta)$ 
for sufficiently large $|t_n|$, and it has a unique zero in the interval 
$(t_n-\delta, t_n+\delta)$. This zero (denote it by $\lambda_n$)
satisfies 
$$
\lambda_n = t_n - \frac{1}{\ta'(t_n) \tb(t_n)}\cdot
\bigg(\frac{C}{t_n} + \frac{f(t_n)}{t_n}\bigg) \cdot \bigg(
1 + \sum_{k\ne n} \frac{1}{\ta'(t_k) \tb(t_k) (\lambda_n-t_k)}
\cdot\bigg(\frac{C}{t_k} + \frac{f(t_k)}{t_k}\bigg) \bigg)^{-1}.
$$
Again, it is easy to see that 
$$
\bigg\{\sum_{k\ne n} \frac{1}{\ta'(t_k) \tb(t_k) (\lambda_n-t_k)}
\cdot\bigg(\frac{C}{t_k} + \frac{f(t_k)}{t_k}\bigg)\bigg\}_{n\in\mathbb{Z}} \in \ell^2
$$
for any choice of $\lambda_n \notin \{t_k\}$ such that
$\lambda_n \in (n-1/3, n+1/3)$ for sufficiently large $|n|$.
Combining this with the asymptotics \eqref{est1}--\eqref{est2} of $\tb(t_n)$
and $\ta'(t_n)$ we conclude that 
\begin{equation}
\label{as1}
\lambda_n = t_n - \frac{C}{t_n} +\frac{\alpha_n}{t_n} = \pi n +\frac{\tilde C}{n} 
+\frac{\tilde \alpha_n}{n},
\end{equation} 
where $\tilde C$ is some constant and $\{\alpha_n\}, 
\{\tilde\alpha_n\} \in \ell^2$.
Thus, we have shown that the zeros $\lambda_n$ 
of $\ta +g$ have the required asymptotics. 
%
%
\medskip
\\
{\bf Proof of Statement 3 of Theorem \ref{char}.}
This statement essentially is already proved. 
Let $f\in PW_2$ be given. We need to find the functions $A$, $B$ such that
\eqref{char3} is satisfied. Comparing the values at $\pi n$, 
and $\pi n+\frac{1}{2}$ we get
$$
A(\pi n)  = (-1)^n \frac{f(\pi n) - f(0)}{\pi n}, \quad n\ne 0,
\qquad A(0) = f'(0),
$$
$$
B\Big(\pi n + \frac{\pi}{2}\Big) = (-1)^{n+1} 
\frac{f(\pi n +\pi/2) - f(0)}{\pi n +\pi/2}.
$$
There exist unique functions $g, h\in PW_1$ (which are real on $\R$) 
such that 
$$
g(0)= f'(0), \qquad 
g(\pi n) = (-1)^n \frac{f(\pi n) - f(0)}{\pi n},\quad n\ne 0,
$$
$$
h\Big(\pi n + \frac{\pi}{2}\Big) = (-1)^{n+1} 
\frac{f(\pi n +\pi/2) - f(0)}{\pi n +\pi/2}.
$$
Now put $A = \sin z + g$, $B = \cos z+h$. It is clear that the function 
$$
Q(z)  = z(A(z)\cos z - B(z)\sin z) = z(g(z) \cos z- h(z) \sin z) 
$$
coincides with $f-f(0)$ at the points $\big\{\frac{\pi m}{2}\big\}_{m\in\mathbb{Z}}$.
Hence, $Q(z) = f(z)-f(0) + P(z) \sin 2z$ for some entire function $P$.
Since $f\in PW_2$ and $Q\in zPW_2$, a standard Phragm\'en--Lindel\"of
principle shows that $P$ is at most constant. Since additionally 
$Q'(0) = f'(0)$ we conclude that $P\equiv 0$.

Thus, the constructed functions $A$ and $B$ satisfy equality \eqref{char3}. Now arguing 
as in the proof of sufficiency of \eqref{char2} we may conclude that
the zeros $\lambda_n$ and $\mu_n$ of $A$ and $B$ have asymptotics \eqref{as}.
Moreover, $A$ and $B$ will be Cartwright class functions given by the
infinite products of the form \eqref{cart}.

The only difference with the above argument is that in statements 1 and 2 
we assumed from the very beginning that $E=A+iB$ is a de Branges function, 
while in statement 3 we must
prove that $E$ is a de Branges function. It is well known that if $A$ 
and $B$ are zero genus canonical products of the form \eqref{cart} (with the constants 
$K>0$) then $E=A+iB$ is a de Branges function 
if and only if the zeros of $A$ and $B$ are interlacing. However, in general 
we can only conclude from the representations \eqref{char1}--\eqref{char2}
that the zeros of $A$ and $B$ are interlacing with possible 
exception of a finite number of zeros. 

To prove the interlacing property 
we need to use the fact that the norm of $f$ is sufficiently small. 
Indeed, the norms of functions $g$ and $h$ depend on the norm of $f$ 
and if $\|f\|_2$ is sufficiently small, then it is easily seen from the previous 
arguments that both $C=-f(0)$ and $\tilde \alpha_n$ in \eqref{as1}
are sufficiently small and we can get $|\lambda_n - \pi n|< 1/2$ 
{\it for all} $n\in\mathbb{Z}$. Analogously, we get $|\mu_n - \pi n-\pi/2|<1/2$ 
when $\|f\|_2$ is sufficiently small, whence $\lambda_n$ and $\mu_n$ are interlacing.
\qed


\ms\section{Horvath' theorem}
\label{horv}

In this section we illustrate applications of our results by obtaining a recent theorem by M. Horvath \cite{Horvath}.

If $\L$ is a sequence of real points, we denote by $\sqrt\L$ the set $\{z|z^2\in\L\}$. The notation
$\sqrt\L\cup\{*,*\}$ stands for the set obtained from $\sqrt\L$ by addition of any two real numbers (not from $\L$).

We say that $\L\in\R$ is a  defining set in the class $Schr(L^2,D)$ of Schroedinger operators on $[0,1]$ with $L^2$-potential and Dirichlet boundary condition at $0$ if there do not exist two different operators $L,\ti L$ from this class whose Weyl functions $\textbf{m}$ and $\ti {\textbf{m}}$ are equal on $\L$.

A version of the following theorem is proved in \cite{Horvath} for all $1\leq p\leq \infty$. In this paper we treat only the case $p=2$, although a similar argument can
be applied to other $p$.

\begin{theorem} 
A set $\L\in\R$ is a  defining set in the class $Schr(L^2,D)$ 
if and only if $\sqrt\L\cup\{*,*\}$ is a uniqueness set
in the Paley--Wiener space $PW_2$.
\end{theorem}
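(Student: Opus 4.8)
The plan is to convert the defining-set property into a vanishing/uniqueness statement for entire functions by means of the characterization in Theorem~\ref{char}, and then to match that statement against uniqueness in $PW_2$.

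First I would set up the dictionary between Weyl functions and de Branges functions. For an operator $L$ in the class with $E=A+iB$, the transformed Weyl function is $m(z)=A(z)/B(z)$, and the relation $m(z)=z\textbf{m}(z^2)$ gives $\textbf{m}(\zeta)=m(\sqrt\zeta)/\sqrt\zeta$; the free potential $q\equiv0$ produces $E=ie^{-iz}$, i.e. $A=\sin z$, $B=\cos z$. Given two operators $L,\tilde L$ with functions $E,\te$, agreement $\textbf{m}=\tilde{\textbf{m}}$ at a point $\lambda\in\L$ is, after the square-root change of variables, exactly the vanishing of $A\tb-\ta B$ at $z=\sqrt\lambda$; equivalently, the vanishing on $\sqrt\L$ of
$$G(z):=z\bigl(A(z)\tb(z)-\ta(z)B(z)\bigr).$$
By the uniqueness results of Marchenko (\cite{march1,march2}, recalled at the end of Subsection~\ref{schr}) the operator is determined by its Weyl function, so $L=\tilde L\iff\textbf{m}\equiv\tilde{\textbf{m}}\iff G\equiv0$. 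Hence $\L$ fails to be a defining set precisely when there are two operators in the class with $G\not\equiv0$ but $G|_{\sqrt\L}=0$.

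Next I would feed in the characterization theorem. By Theorem~\ref{char}(2), for any two operators in the class one has $G=f+C$ with $C\in\R$ and an even real $f\in PW_2$; since $G(0)=0$ this forces $C=-f(0)$, so $G=f-f(0)$. Conversely, Theorem~\ref{char}(3) realizes, for every sufficiently small even real $f\in PW_2$, an operator for which $G=f-f(0)$ against the free pair $\ta=\sin z$, $\tb=\cos z$. As the requirement $G|_{\sqrt\L}=0$ is linear and scale-invariant, any nonzero bad $G$ can be rescaled into the range of Theorem~\ref{char}(3). Therefore $\L$ is not a defining set if and only if there exists a nonzero even real $f\in PW_2$ with $(f-f(0))|_{\sqrt\L}=0$. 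When $\L$ is infinite, evaluating $f-f(0)$ along $\sqrt\L\to\infty$ (where $f\to0$) forces $f(0)=0$, so the condition collapses to the existence of a nonzero even real $f\in PW_2$ vanishing on $\sqrt\L\cup\{0\}$.

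It remains to prove that this last property is equivalent to $\sqrt\L\cup\{*,*\}$ failing to be a uniqueness set for $PW_2$, and this equivalence is the step I expect to be the main obstacle. My plan is to use that $\sqrt\L$ is symmetric about the origin: an arbitrary $F\in PW_2$ vanishing on $\sqrt\L$ splits into even and odd parts, each again vanishing on $\sqrt\L$, and under $w=z^2$ an even $PW_2$ function becomes a function of $w$ vanishing on $\L$ itself. The delicate issue is accounting precisely for the two extra points: I would aim to show that adjoining two generic real points to $\sqrt\L$ kills exactly the two-dimensional ambiguity by which an even function can vanish on $\sqrt\L\cup\{0\}$ without $\sqrt\L$ being a full $PW_2$-uniqueness set, one dimension corresponding to the additive constant $C=\int_0^1q$ in \eqref{char2} and the other to the normalization fixed by the parity together with the leading asymptotics \eqref{as}. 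To make the codimension-two count rigorous I would phrase everything through complete interpolating sequences: each of $\{\lambda_n\}$, $\{\mu_n\}$ is complete interpolating for $PW_1$, their union is complete interpolating for $PW_2$, so "uniqueness up to two points" becomes a finite-defect completeness statement, after which one only has to verify that two point evaluations are independent on the finite-dimensional defect space. Combining this equivalence with the reduction above yields that $\L$ is a defining set if and only if $\sqrt\L\cup\{*,*\}$ is a uniqueness set for $PW_2$, as claimed.
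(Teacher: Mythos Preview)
Your overall strategy---translating the defining-set condition into the vanishing of $G(z)=z(A\tilde B-\tilde A B)$ and then invoking Theorem~\ref{char}---matches the paper's, and your reduction to ``$\L$ fails to be defining iff some nonzero even real $f\in PW_2$ vanishes on $\sqrt\L\cup\{0\}$'' is correct (indeed you are more careful than the paper in noting the rescaling needed to invoke Theorem~\ref{char}(3)). The genuine gap is your final step, the equivalence between this even-function condition and non-uniqueness of $\sqrt\L\cup\{*,*\}$. Your proposed codimension-two count via complete interpolating sequences is not a workable plan: the space of $PW_2$-functions vanishing on $\sqrt\L$ need not be two-dimensional, and nothing about $\{\lambda_n\}\cup\{\mu_n\}$ being complete interpolating for $PW_2$ says anything about an arbitrary $\sqrt\L$. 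The remark about ``one dimension corresponding to the constant $C$ and the other to the normalization'' is not a correct picture of the defect.

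The paper handles the two directions of this equivalence with short explicit tricks rather than any dimension count. For the direction needed in the ``if'' part: given an even $f\in PW_2$ with $f|_{\sqrt\L\cup\{0\}}=0$, evenness forces a double zero at $0$, so $f/z\in PW_2$ still vanishes at $0$; then for any $a\notin\sqrt\L$ the function $(z-a)f(z)/z\in PW_2$ vanishes on $\sqrt\L\cup\{0,a\}$, exhibiting non-uniqueness. (The paper phrases this directly as: $(z-a)(A\tilde B-\tilde AB)\in PW_2$ vanishes on $\sqrt\L\cup\{0,a\}$.) For the direction needed in the ``only if'' part: one first observes that non-uniqueness of $\sqrt\L\cup\{a,b\}$ is independent of the choice of $a,b\notin\sqrt\L$ (relocate zeros by $g\mapsto (z-c)g/(z-a)$), so one may take $\{a,b\}=\{0,1\}$; then, given a nonzero real $g\in PW_2$ vanishing on $\sqrt\L\cup\{0,1\}$, at least one of $g$ and $g/(z-1)$ is not odd, and the even part $h(z)=g(z)+g(-z)$ of that one is a nonzero even $PW_2$-function vanishing on the symmetric set $\sqrt\L\cup\{0\}$. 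These two one-line manipulations replace your codimension argument entirely.
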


\begin{proof} A simple proof of the 'if' part was given in \cite{MIF}. 
Here we present a version of it for reader's convenience. The 'only if' part
follows from Theorem \ref{char}.
\medskip
\\
\textbf{If:} Suppose that $\textbf{m}$ and $\ti {\textbf{m}}$ are equal on $\L$ for some $L, \ti L$.
Once again, without loss of generality we can assume that both operators are positive. Otherwise, we may add a large positive constant $a$ to both potentials, and  using  the transformation
$$
F(z)\mapsto F(\sqrt{z^2+a^2})
$$
for even  entire functions we observe  that $\sqrt\Lambda$  is a uniqueness set if $\sqrt{\Lambda+a}$ is.

Then, after the square root transform,
$m$ and $\ti m$ are equal on the set $\sqrt\L$. Also, by our definitions, $m(0)=\ti m(0)$. Hence $\ti \Theta = \Theta$ on $\sqrt\L\cup \{0\}$, i.e.
the function $(z-a)(\ti\Theta -\Theta)=0$ on $\sqrt\L\cup \{0,a\}$, 
where $a$ is any point not in $\sqrt\L\cup \{0\}$. By the definition of Weyl inner functions, the last equation translates into
$(z-a)(E^\# \ti E - \ti E^\# E)=0$ or equivalently $(z-a)(A \ti B - \ti A B)=0$. 
Since by statement 2 of Theorem \ref{char} the function $(z-a)(A \ti B - \ti A B)$ belongs to $PW_2$, we obtain a contradiction.
\medskip
\\
\textbf{Only if:} Without loss of generality, 
$0,1\not\in \sqrt\L$. Suppose that $\sqrt\L\cup\{0,1\}$ is not a uniqueness set for $PW_2$ and let $f\in PW_2$ be a non-trivial
function which vanishes on that set and real on $\mathbb{R}$. At least one of the functions $f(z)$ and $\frac{f(z)}{z-1}$ is not odd. Assume that $f$ is not odd. Put $\tilde{f}(z)=f(z)+f(-z)$. Clearly $\tilde{f}$ is a non-trivial even function.

By Theorem \ref{char}, $\tilde{f}=z(A\cos z - B\sin z)$ for some $E=A+iB$ corresponding to a Schroedinger operator $L$.
It is left to notice that then $\textbf{m}=\textbf{m}_0$ on $\L$, where $\textbf{m}$ corresponds to $L$ and $\textbf{m}_0$ corresponds to the free operator.
\end{proof}

\begin{remark}
{\rm In the second part of our proof we could obtain the following statement.
If $\sqrt\L\cup\{*,*\}$ is not a uniqueness set
in the Paley--Wiener class $PW_2$, then for any operator from $Schr(L^2,D)$ there exists 
another operator from the same class whose $\textbf{m}$-function takes the same values on $\L$.}
\end{remark}


\ms\section{Distribution of zeros of $E$}
\label{res}

In this section we study the distribution of zeros of an entire function $E$ corresponding to the Shroedinger
equation.

Recall that all zeros of a de Branges function belong to  $\mathbb{C}^-$. We will show that in any 
logarithmic strip there exists only finite number of zeros of $E$ corresponding to a Shroedinger equation
with $L^2$ potential.  

\begin{theorem}
Let $E$ be a de Branges function which corresponds to a Shroedinger equation
with $L^2$ potential.  Then there exists $C>0$ such that the logarithmic strip
$$
\Big \{z\in\mathbb{C^-}: -\frac{1}{2}\log(|\rea z|+2) + C \leq \ima z < 0 \Big\}
$$
contains only finite number of zeros of $E$.
\label{ZE}
\end{theorem}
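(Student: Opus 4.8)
The plan is to reduce the location of the zeros of $E$ to a single scalar identity and then play the growth of a Paley--Wiener function off against the trigonometric size of $B$. First I would pass to $G(z)=E(z)e^{iz}$, whose zeros are exactly the zeros of $E$. Writing $E=A+iB$ and expanding,
\[
G = (A\cos z - B\sin z) + i(A\sin z + B\cos z) =: P + iQ,
\]
where by statement 1 of Theorem \ref{char} (equation \eqref{char1}) one has $zP=f+C$ for some even real $f\in PW_2$, i.e. $P(z)=(f(z)+C)/z$. If $z_0=x_0+iy_0\in\C^-$ is a zero of $E$, then $P(z_0)=-iQ(z_0)$; moreover $A(z_0)=-iB(z_0)$ (and $B(z_0)\neq0$, since $A,B$ have no common zeros), so $Q(z_0)=B(z_0)(\cos z_0-i\sin z_0)=B(z_0)e^{-iz_0}$. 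Taking moduli and using $|e^{-iz_0}|=e^{y_0}=e^{-|y_0|}$ yields the core identity
\[
\frac{|f(z_0)+C|}{|z_0|} = |B(z_0)|\,e^{-|y_0|}. \qquad (\dagger)
\]

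Next I would estimate the two sides of $(\dagger)$ against each other inside the strip. For the left side, the Paley--Wiener bound $|f(x+iy)|\le c_0\|f\|_2\,e^{2|y|}$ together with the defining inequality of the strip, $|y_0|\le\frac12\log(|x_0|+2)-C$, gives $e^{2|y_0|}\le e^{-2C}(|x_0|+2)$ and hence
\[
\frac{|f(z_0)+C|}{|z_0|}\ \le\ 2c_0\|f\|_2\,e^{-2C} + \frac{|C|}{|x_0|} \qquad (|x_0|\ge2).
\]
For the right side I claim a uniform lower bound $|B(z_0)|e^{-|y_0|}\ge\delta>0$. Indeed, by Lemma \ref{entire} one has $|B(z_0)|\asymp|\cos z_0|$ as long as $z_0$ stays away from the zeros $\mu_n$ of $B$ (there the ratio in \eqref{sin} is $\asymp1$), and a direct computation gives $|\cos z_0|^2=\sinh^2 y_0+\cos^2 x_0$; consequently $|\cos z_0|e^{-|y_0|}$ is bounded below \emph{except} when both $|y_0|$ and $|\cos x_0|$ are small, i.e. when $z_0$ lies close to a $\mu_n$. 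Choosing $C$ large enough that $2c_0\|f\|_2\,e^{-2C}<\delta/2$ makes the left side of $(\dagger)$ strictly smaller than the right side once $|x_0|$ is large, contradicting $(\dagger)$; thus there are no zeros in this part of the strip.

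It remains to rule out the degenerate regime where $z_0$ is close to a zero $\mu_n$ of $B$ and $|y_0|$ is small. Here I would invoke interlacing: at a zero $A(z_0)=-iB(z_0)$ forces $|A(z_0)|=|B(z_0)|$, but near a point of $\pi\Z+\frac\pi2$ the estimate \eqref{sin} shows $|B(z_0)|$ is comparable to $\dist(z_0,\{\mu_n\})$ and hence small, whereas $|A(z_0)|\asymp1$ because $z_0$ is then bounded away from the zeros $\lambda_n\approx\pi n$ of $A$; this is impossible once $|\cos x_0|$ and $|y_0|$ are small enough. Finally, every zero with $|x_0|$ bounded lies in a bounded subset of $\C^-$ and there are only finitely many such, so the strip contains only finitely many zeros of $E$.

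I expect the main obstacle to be the uniform lower bound on $|B(z_0)|e^{-|y_0|}$ across the whole strip: one must carry the estimates of Lemma \ref{entire} simultaneously near and far from the two lattices $\pi\Z$ and $\pi\Z+\frac\pi2$, and convert the interlacing of $\{\lambda_n\}$ and $\{\mu_n\}$ into the quantitative statement that $z_0$ cannot be close to both zero sets at once. As a byproduct, $(\dagger)$ gives $|f(z_0)|\gtrsim|x_0|$ at any zero, whence $|y_0|\gtrsim\frac12\log|x_0|$; this is the precise logarithmic rate and explains the sharpness claimed in the introduction.
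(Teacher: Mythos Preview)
Your argument is correct and lands on the same scalar identity as the paper (the paper writes it as $z_nA(z_n)e^{-iz_n}=f(z_n)+C$, which is your $(\dagger)$ up to $|A(z_0)|=|B(z_0)|$), but the two proofs diverge in how they keep the zeros away from the real axis. The paper first proves, as a separate Lemma~\ref{l2}, that a fixed horizontal strip $\{-\delta\le\ima z<0\}$ is zero-free, using the formula $\varphi'(t)=\sum_n|\ima z_n|/|t-z_n|^2+a$ for the phase and showing $\varphi'\in L^\infty(\R)$ via the Herglotz expansion of $m=A/B$. Once $|y_n|\ge\delta$, Lemma~\ref{l1} gives $|A(z_n)|\asymp|\sin z_n|$ uniformly, and then $|z_ne^{iz_n}|\lesssim e^{|y_n|}$ yields the inequality $|x_n|+|y_n|\le Me^{2|y_n|}$ for \emph{all} zeros, so the logarithmic strip is entirely zero-free for large $M$. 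Your substitute for Lemma~\ref{l2} is the interlacing observation $|A(z_0)|=|B(z_0)|$ together with $|A|\asymp1$ and $|B|\asymp\dist(z_0,\{\mu_n\})$ near $\pi\Z+\pi/2$; this also works, and is arguably more elementary than the phase-function computation, but it forces you to split into cases (``$|B(z_0)|e^{-|y_0|}\ge\delta$'' versus the ``degenerate regime'') and to track the constants through the dichotomy. The paper's route is cleaner because the a~priori strip $|y_n|\ge\delta$ makes the ratio estimates of Lemma~\ref{l1} uniform and eliminates the case analysis. Two small points: you are overloading the letter $C$ (the constant in the strip versus the constant in $f+C$), and your identification ``$z_0$ close to $\pi\Z+\frac\pi2$'' $\Leftrightarrow$ ``$z_0$ close to a $\mu_n$'' is literally true only for large $n$, but the finitely many exceptions are harmless.
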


We will need two lemmas.

\begin{lemma}
For any $\delta>0$ there exist two constants $c_\delta, C_\delta>0$ such that
$$c_\delta\leq\biggl{|}\frac{A(z)}{\sin \pi z}\biggr{|}\leq C_\delta,
\qquad c_\delta\leq\biggl{|}\frac{B(z)}{\cos \pi z}\biggr{|}\leq C_\delta,$$
 whenever $|\ima z|\geq\delta$. 
\label{l1}
\end{lemma}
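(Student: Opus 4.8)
The plan is to obtain both two-sided bounds directly from the asymptotic comparison \eqref{sin} of Lemma \ref{entire}, thereby reducing the lemma to an elementary comparison of two distance functions plus a compactness argument near the real axis. By symmetry it suffices to treat $A(z)/\sin z$; the argument for $B(z)/\cos z$ is identical with $\{\mu_n\}$ and $\pi\Z+\frac{\pi}{2}$ replacing $\{\lambda_n\}$ and $\pi\Z$. Recall from \eqref{sin} that there is $R_0>0$ with
$$
\bigg|\frac{A(z)}{\sin z}\bigg| \asymp \frac{\dist(z,\{\lambda_n\})}{\dist(z,\pi\Z)}, \qquad |z|\ge R_0,
$$
so for large $|z|$ the claim will follow once I show that on $\{|\ima z|\ge\delta\}$ these two distances are comparable, with constants depending only on $\delta$.

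For the distance comparison I would use that the asymptotics \eqref{as} give $\lambda_n-\pi n\to 0$, hence $M:=\sup_n|\lambda_n-\pi n|<\infty$. Write $z=x+iy$ with $|y|\ge\delta$ and let $\pi n_0$ be a nearest point of $\pi\Z$, so $|x-\pi n_0|\le\frac{\pi}{2}$ and $|y|\le\dist(z,\pi\Z)\le\sqrt{(\pi/2)^2+y^2}$. For the upper bound the triangle inequality gives $\dist(z,\{\lambda_n\})\le|z-\lambda_{n_0}|\le\dist(z,\pi\Z)+M\le(1+M/\delta)\dist(z,\pi\Z)$, using $\dist(z,\pi\Z)\ge|y|\ge\delta$. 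For the lower bound, every $\lambda_n$ is real, so $\dist(z,\{\lambda_n\})\ge|y|$, whence
$$
\frac{\dist(z,\{\lambda_n\})}{\dist(z,\pi\Z)}\ \ge\ \frac{|y|}{\sqrt{(\pi/2)^2+y^2}}\ \ge\ \frac{\delta}{\sqrt{(\pi/2)^2+\delta^2}},
$$
the last step because $t\mapsto t/\sqrt{(\pi/2)^2+t^2}$ is increasing. This gives $\dist(z,\{\lambda_n\})\asymp\dist(z,\pi\Z)$ uniformly on $\{|\ima z|\ge\delta\}$, and combined with the display above it yields the asserted bounds for $|z|\ge R_0$.

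It remains to cover the bounded part $K=\{|z|\le R_0,\ |\ima z|\ge\delta\}$, which I would handle by compactness. The set $K$ is compact, and on $K$ the function $A(z)/\sin z$ is continuous, finite and nonvanishing: the zeros of $\sin z$ lie in $\pi\Z\subset\R$, and by the spectral description recalled before Theorem \ref{chel} all zeros $\lambda_n$ of $A$ are real as well, so neither $A$ nor $\sin z$ vanishes on $K$. A continuous nonvanishing function on a compact set is bounded above and below by positive constants, supplying $c_\delta,C_\delta$ on $K$; taking the smaller lower bound and larger upper bound from the two regimes finishes the argument, and the same three steps prove the statement for $B/\cos z$.

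The only genuinely delicate point is bridging the two regimes. Estimate \eqref{sin} is asymptotic, valid only for large $|z|$, and so cannot by itself control a neighborhood of the real axis at bounded heights; the compactness step is what closes this gap, and it in turn relies essentially on the reality of the zeros of $A$ and $B$ (guaranteed spectrally), without which $A/\sin z$ could vanish inside $K$ and the lower bound would fail.
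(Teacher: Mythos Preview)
Your argument is correct and complete. It differs from the paper's proof in that you work purely with the distance comparison \eqref{sin}: once $|\ima z|\ge\delta$ you bound the ratio $\dist(z,\{\lambda_n\})/\dist(z,\pi\Z)$ from both sides by elementary geometry (triangle inequality plus the reality of the $\lambda_n$), and then close the bounded region by compactness. The paper instead returns to the finer exponential representation \eqref{A1longest},
\[
\frac{A(z)}{C_1\sin z}=\exp\Bigl[\sum_{k\ne0}\frac{\pi k-\lambda_k}{z-\pi k}+o(1)\Bigr],
\]
splits the sum using $\lambda_k-\pi k=C/k+a_k/k$, recognizes $\sum_{k\ne0}\frac{1}{k(z-\pi k)}$ as essentially the cotangent series, and bounds $\sum_{k\ne0}\frac{a_k}{z-\pi k}$ by Cauchy--Schwarz. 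Your route is shorter and requires only the qualitative estimate \eqref{sin} rather than the sharper asymptotics \eqref{A1longest}; the paper's route, on the other hand, yields slightly more structural information (the exponent itself is $O(1)$, not merely the ratio), which is in line with how \eqref{A1longest} was already exploited in Corollary~\ref{cor1}. Your explicit compactness step on $\{|z|\le R_0,\ |\ima z|\ge\delta\}$, together with the observation that it hinges on the reality of the zeros of $A$ and $B$, is a point the paper's proof leaves implicit.
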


\begin{proof}
Let $C_1$ be the constant defined by \eqref{sin2}. Recall the estimate \eqref{A1longest} from the proof of Corollary \ref{cor1}:

$$
\frac{A(z)}{C_1\sin z}  = \exp\biggl{[}\sum_{k\neq0}\frac{\pi k-\lambda_k}{z-\pi k}+o\biggl{(}\frac{1}{\dist(z,\pi\mathbb{Z})^2}\biggr{)}\biggr{]}
$$
$$
= \exp\biggl{[}-\sum_{k\neq0}\frac{c}{k(z-\pi k)}-\sum_{k\neq0}\frac{a_k}{z-\pi k}+o(1)\biggr{]},
$$
when $|z|\rightarrow\infty$, $|\ima z|\geq\delta$. Clearly, $\biggl{|}\sum_{k\neq0}\frac{a_k}{z-\pi k}\biggr{|}=O(1), |\ima z|\geq \delta$,
and 
$$\sum_{k\neq0}\frac{1}{k(z-\pi k)}=\frac{\pi}{z}\biggl{(}\cot z-\frac{1}{z}\biggr{)}.$$
Hence the expression in the exponent is bounded by a constant depending only on $\delta$,
but not on $z$.
\end{proof}

\begin{lemma}
There exists $\delta>0$ such that the strip $\{-\delta\leq\ima z<0\}$ 
contains no zeros of $E$.
\label{l2}
\end{lemma}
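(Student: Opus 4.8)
The plan is to reduce the lemma to a uniform two–sided bound for $|E|$ on the real line and then to propagate the lower bound into a thin strip using the fact that $E$ has exponential type $1$. Concretely, I would first prove that there are constants $0<c_0\le M<\infty$ with $c_0\le |E(x)|\le M$ for every $x\in\R$, and then invoke Bernstein's inequality together with a Phragm\'en--Lindel\"of estimate to control $|E(x+iy)-E(x)|$ for small $|y|$.

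For the bound on $\R$ I would use the asymptotics \eqref{sin} of Lemma \ref{entire}. Writing $|E(x)|^2=A(x)^2+B(x)^2$, the upper bound is immediate: since $|\sin x|\le \dist(x,\pi\Z)$, formula \eqref{sin} gives $|A(x)|\lesssim \dist(x,\{\lambda_n\})$, which is bounded because the gaps $\lambda_{n+1}-\lambda_n$ tend to $\pi$; the same applies to $B$, so $M:=\sup_{\R}|E|<\infty$. The lower bound is the heart of the argument, and I would establish it by showing that $A$ and $B$ cannot be small at the same point. For a given large $|x|$ at least one of $|\sin x|,|\cos x|$ is $\ge 1/\sqrt2$. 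If $|\sin x|\ge 1/\sqrt2$, then $x$ stays a definite distance away from $\pi\Z$, hence (by \eqref{as}) away from every $\lambda_n$, so \eqref{sin} yields $|A(x)|=|\sin x|\,|A(x)/\sin x|\gtrsim 1$; symmetrically, $|\cos x|\ge 1/\sqrt2$ forces $|B(x)|\gtrsim 1$. Thus $\liminf_{|x|\to\infty}|E(x)|>0$, and since $E$ is a de Branges function it has no real zeros, so by continuity $c_0:=\inf_{\R}|E|>0$.

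For the second step, $E$ has exponential type $1$ and is bounded by $M$ on $\R$, so Bernstein's inequality gives $\|E'\|_{L^\infty(\R)}\le M$. As $E'$ is again of exponential type at most $1$ and bounded by $M$ on $\R$, the standard Phragm\'en--Lindel\"of estimate yields $|E'(x+iy)|\le M e^{|y|}$ throughout $\C$. Integrating along the vertical segment from $x$ to $x+iy$ gives
\[
|E(x+iy)-E(x)|\le \int_0^{|y|}|E'(x+it)|\,dt\le |y|\,M e^{|y|}.
\]
Choosing $\delta>0$ so small that $\delta M e^{\delta}<c_0$, we conclude that for all $x\in\R$ and $|y|\le\delta$,
\[
|E(x+iy)|\ge |E(x)|-|E(x+iy)-E(x)|\ge c_0-\delta M e^{\delta}>0,
\]
so $E$ has no zeros in the strip $\{|\ima z|\le\delta\}$, and in particular none in $\{-\delta\le \ima z<0\}$.

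The step I expect to be the main obstacle is the uniform positive lower bound $\inf_{\R}|E|>0$; once it is in hand the rest is routine. The delicate point is that the bound must hold uniformly in $x$, and the only mechanism preventing $|E|$ from dipping to zero is the interlacing of the zeros of $A$ and $B$ --- quantified by the asymptotics \eqref{as} and the estimates \eqref{sin} --- which guarantees that $A$ and $B$ are never simultaneously small.
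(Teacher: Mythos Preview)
Your argument is correct, and it is a genuinely different route from the paper's. The paper proves the lemma by showing that the phase function has bounded derivative on $\R$: writing $\phi'(t)=\sum_n |\ima z_n|/|t-z_n|^2 + a$ (so that $\phi'(\rea z_n)\ge 1/|\ima z_n|$), it suffices to show $\phi'\in L^\infty(\R)$, and this is obtained from the Herglotz-type expansion of $m=A/B$ together with the asymptotics of $A(\mu_n)$ and $B'(\mu_n)$ from Corollary~\ref{cor1}. Your approach avoids the phase function entirely: you extract from \eqref{sin} and \eqref{as} a two-sided bound $c_0\le |E|\le M$ on $\R$, then use Bernstein's inequality and Phragm\'en--Lindel\"of to push the lower bound into a thin strip. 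Both proofs rest on the same underlying input (the asymptotics of the zeros of $A$ and $B$), but your argument is more elementary and would apply verbatim to any entire function of finite exponential type that is bounded above and below on the real line; the paper's argument, by contrast, yields the slightly stronger intermediate statement $\phi'\in L^\infty(\R)$, which is natural from the inner-function viewpoint and is of independent interest. Your identification of the key step --- the uniform lower bound $\inf_\R|E|>0$ --- is exactly right, and your justification via the interlacing encoded in \eqref{as} is clean.
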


\begin{proof}
Let $\varphi$ be a phase function for $E$. It is well known that 
$$\varphi'(t)=\sum_n\frac{|\ima z_n|}{|t-z_n|^2}+a,$$
where $z_n$ are zeros of $E$ and $a$ is some non-negative real constant. In particular, $\varphi'(z_n)\geq \frac{1}{|\ima z_n|}$.
It is sufficient to show that for a function $E$ which corresponds to a Shroedinger equation we have $\varphi'\in L^\infty(\mathbb{R})$. 

Note that $\varphi'=|\Theta'|\slash2$, where $\Theta=\frac{m-i}{m+i}$ is the Weyl inner function.
Clearly, the modified Weyl function $m$ is given by
$$m(z)=\frac{A(z)}{B(z)}=\sum_n\frac{A(\mu_n)}{B'(\mu_n)}\biggl{(}\frac{1}{z-\mu_n}+\frac{1}{\mu_n}\biggr{)}.$$
Put $v_n=-\frac{A(\mu_n)}{B'(\mu_n)}$. By Corollary \ref{cor1}, we have $v_n=1+w_n$, $\{w_n\}\in\ell^2$. Then
$$\varphi'(t)=\frac{m'(t)}{|m(t)+i|^2}=\biggl{|}i+\sum_nv_n\biggl{(}\frac{1}{t-\mu_n}+\frac{1}{\mu_n}\biggr{)}\biggr{|}^{-2}
\times \sum_n\frac{v_n}{|t-\mu_n|^2}.$$
Clearly, for any $\varepsilon>0$ there exists $C_{\varepsilon}$ such that 
$$\varphi'(t)\leq \sum_n\frac{v_n}{|t-\mu_n|^2}\leq C_{\varepsilon}$$
whenever $\dist\{t,\{\mu_n\}\}\geq\varepsilon$.

On the other hand, if $\varepsilon$ is sufficiently small and $|t-\mu_k|<\varepsilon$ and $k$ is sufficiently large, we have
\begin{equation}
\biggl{|}\frac{1}{\mu_k}+\frac{1}{t-\mu_k}+\sum_{n\neq k}v_n\biggl{(}\frac{1}{t-\mu_n}+\frac{1}{\mu_n}\biggr{)}+
i\biggr{|}\asymp \frac{1}{|t-\mu_k|}
\label{thinest}
\end{equation}
and $\sum_n\frac{v_n}{|t-\mu_n|^2}\asymp \frac{1}{|t-\mu_k|^2}$ whence $\varphi'(t)\leq C_\varepsilon$ for some constant $C_\varepsilon$ independent of $t$.

In the estimate \eqref{thinest} we used that, by the asymptotics of $\mu_n$ and $v_n$,
$$
\biggl{|}\sum_{n\neq k}v_n\biggl{(}\frac{1}{t-\mu_n}+\frac{1}{\mu_n}\biggr{)}-\sum_{n}\biggl{(}\frac{1}{t-(\pi n+\pi\slash2)}+\frac{1}{\pi n +\pi\slash2}\biggr{)}\biggr{|}\lesssim1,
$$
when $|t-\mu_k|\leq\varepsilon$.
\end{proof}
\medskip
\noindent
{\it Proof of Theorem \ref{ZE}.} Let $\{z_n =x_n +iy_n\}$ 
denote the zeros of $E$. By Lemma~\ref{l2} there exists 
$\delta>0$ such that $y_n\leq -\delta$. By Theorem \ref{mainth} 
there exists $f\in PW_2$ and $C\in\mathbb{R}$ such that
$$
z(A(z)\cos z- B(z)\sin z)=f(z)+C.
$$
Since $E(z_n)=0$ we have $B(z_n)=iA(z_n)$.
Hence, $z_nA(z_n)e^{- iz_n} = f(z_n)+C$. By Lemma \ref{l1}, 
$|A(z_n)|\asymp |\sin z_n|$. Thus,
$$
|z_ne^{iz_n}|\lesssim \frac{|f(z_n)+C|}{|\sin z_n|} \lesssim e^{|y_n|},
$$ 
that is, $|x_n|+|y_n| \le  Me^{2|y_n|}$, for some $M>0$. Hence,
the logarithmic strip $|y|\leq \frac{1}{2} \log\bigl{|}\frac{x}{M}\bigr{|}$ 
contains no zeros of $E$ if $M$ is sufficiently big.
\qed

\begin{remark}
{\rm By Theorem \ref{mainth} the function
$$
E(z)=\frac{[z^2-9\pi^2\slash 16]\sin z}{z^2-\pi^2}+ 
i \cos z= \frac{(32z^2-25\pi^2)ie^{-iz}-7\pi^2 i e^{iz}}{32(z^2-\pi^2)}.
$$
corresponds to a Schroedinger operator with $L^2$ potential 
and all zeros of $E$ belong to a logarithmic strip
$\{-\log(|\rea z|)-M<\ima z<0\}$. 
Thus, the logarithmic form of the strip is optimal. However, in this example
the coefficient at $\log|\rea z|$ in the definition of the zero-free 
strip is 1, while in Theorem \ref{ZE} it is 1/2. }
\label{rem5}
\end{remark}

\end{document}